\newtheorem{thm}{Theorem}
\newtheorem{prop}{Proposition}
\newtheorem{rem}{Remark}
\begin{document}
%
\title{A geometrical look at MOSPA Estimation using Transportation Theory}


\author{Gabriel~M.~Lipsa,
	Marco~Guerriero,~\IEEEmembership{Senior,~IEEE,}
	\thanks{G. Lipsa and M. Guerriero are with the General Electric Global Research Center, Niskayuna, NY, USA,
		e-mails: (gabriel.lipsa,marco.guerriero)@ge.com}
	\thanks{Manuscript received xxx, 2016.}}


%


\maketitle

\begin{abstract}
It was shown in \cite{hoffman2004multitarget} that the Wasserstein distance is equivalent to the Mean Optimal
Sub-Pattern Assignment (MOSPA) measure for empirical probability density functions. A more recent paper \cite{baum2015wasserstein}, extends on it by drawing new connections between the MOSPA concept, which is getting a foothold in the multi-target tracking community, and the Wasserstein distance, a metric widely used in theoretical statistics. However, the comparison between the two concepts has been overlooked. In this letter we prove that the equivalence of Wasserstein distance with the MOSPA measure holds for general types of probability density function. This non trivial result allows us to leverage one recent finding in the computational geometry literature to show that the Minimum MOPSA (MMOSPA) estimates are the centroids of additive weighted Voronoi regions with a specific choice of the weights.
\end{abstract}
\begin{IEEEkeywords}
MOSPA, Transportation Theory, Wasserstein Distance, Target Tracking.
\end{IEEEkeywords}

%
\IEEEpeerreviewmaketitle

\section{Introduction}
The mean squared error (MSE) has long been the dominant quantitative performance metric in the field
of signal processing. An estimator which minimizes the MSE is referred to as a minimum MSE (MMSE)
estimator. In target tracking \cite{bar2011tracking}, the traditional problem is posed as the finding of the MMSE estimate of the target states. Since the MMSE estimate is given by the expected value of the posterior probability density function, which intrinsically has an ordering (labeling) of the states, the MMSE estimator can be classified as a labeled estimator.
In some applications the labeling of the objects is not relevant. For these problems, it
is more reasonable to instead of minimizing the MSE try to minimize a measure which eschews target
labeling.
A measure that has received an increasing amount of attention in the later years, and which can be seen
as a natural extension of the MSE to label-free estimation, is the OSPA
metric \cite{schuhmacher2008consistent}. The OSPA is a label-free correspondent to the squared error. The MOSPA,
which is the counterpart of the MSE, was introduced in \cite{guerriero2010shooting} where the authors also described how to calculate the MMOSPA estimates. Explicit solutions for MMOSPA estimation are only available in the scalar case \cite{crouse2011aspects}. In \cite{crouse_advances2013} and references therein, various techniques for approximating MMOSPA estimates are presented .
In \cite{baum2015wasserstein}, a connection between the empirical MMOSPA estimate and the Wasserstein barycenter for point cloud was established. This result builds upon the Lemma 1 in \cite{baum2015wasserstein} which states that the Wasserstein distance coincides with the MOSPA for empirical probability densities defined on sets with the same cardinality.
The Wasserstein distance defined using empirical probability densities can be computed solving a linear programming (LP) problem \cite{hoffman2004multitarget,rubner2000earth}. The LP formulation of the transportation problem, is also known as the Hitchcock-Koopmans transportation problem \cite{Rao2009}.
The results of this letter are twofold:
\begin{itemize}
	\item We extend the result in \cite{baum2015wasserstein,hoffman2004multitarget} for a wider class of probability measures,  to draw a theoretical connection between the MOSPA measure and the more general transportation problem, known as Monge-Kantorovich transportation problem \cite{Villani, Kantorovich1948, Monge1781}.
	\item This main finding, in conjunction with a recent result in computational geometry \cite{Geiß2012}, allows us to provide new insights on MOSPA estimation revealing interesting geometrical structure and properties of the MMOSPA estimates. 
\end{itemize}
The remainder of the paper is organized as follows. In Section \ref{sec:probform} we formalize our problem. Section \ref{sec:mainresults} contains
our main theoretical contributions and also provides a geometrical interpretation of the MMOSPA and in Section \ref{sec:conclusions} we summarize our conclusions.

\section{Problem Formulation}
\label{sec:probform}
In this section, we will present the main problem of this letter. We will first define notations used in this paper and then we will define notions of interest such as OSPA, MOSPA, MMOSPA and the Wasserstein distance.

Let us assume that there are $N$ objects of interest, which reside in the space $\mathbb{R}^{n_x}$ with $n_x$ being a positive integer. The states of all the objects are denoted by the sequence of vectors $\{\mathbf{X}_{i} \}_{i=1}^{N} \in \mathbb{R}^{n_x}$. The vectors $\{\mathbf{X}_{i} \}_{i=1}^{N}$ are stochastic with a joint probability measure $\mu$\footnote{Any practical multi-target tracking setting would require measurements from which the target states estimates are computed. If we denote the measurements by $\mathbf{Z}$, $\mu\left(\mathbf{X} \right)$, which corresponds to the joint posterior measure (i.e. a typical assumption in target tracking is to use a Gaussian Mixture model to represent the joint distribution), should be replaced by $\mu\left(\mathbf{X}|\mathbf{Z} \right)$. However, for clarity purposes we will use $\mu$ without the conditions on the measurement.} defined on $\mathbb{R}^{N\times n_x}$. Moreover we assume that $\mu$ is absolutely continuous with respect to the Lebesque measure \cite{Billingsley}.

Define the stacked vector $\mathbf{X}$ as follows \footnote{The symbol T stands for transpose.}:
\begin{equation}
\label{stackedvector}
\mathbf{X} = \left[\mathbf{X}_1^{T} \quad \mathbf{X}_2^{T} \quad \ldots \quad \mathbf{X}_N^{T}   \right]^{T}
\end{equation}    
For a sequence of vectors of states estimates $\{\mathbf{\hat{X}}_{i} \}_{i=1}^{N}$, let us define the stacked vector of states estimates $\mathbf{\hat{X}}$ as in equation~(\ref{stackedvector}). 

Define $\Pi_{N}$ to be the set of permutations on the set $\{ 1, 2, \ldots, N\}$. 
For a permutation $\pi \in \Pi_N$\footnote{A permutation $\pi \in \Pi_N$ is a bijective mapping from the set $\{ 1, 2, \ldots, N\}$ to the set $\{ 1, 2, \ldots, N\}$. The value of the mapping for a particular index $i$ is denoted by $\pi(i)$.} and a stacked vector defined in equation~(\ref{stackedvector}), let us define $\pi(\mathbf{X})$ as follows:
\begin{equation}
\pi(\mathbf{X}) =  \left[\mathbf{X}_{\pi(1)}^{T} \quad \mathbf{X}_{\pi(2)}^{T} \quad \ldots \quad \mathbf{X}_{\pi(N)}^{T}   \right]^{T}
\end{equation}
The vector $\pi(\mathbf{X})$ permutes the single objects states in $\mathbf{X}$ according to $\pi$.
Define OSPA \cite{baum2015polynomial} as follows \footnote{In this letter, we use the definition of OSPA for sets with the same cardinality.}:
\begin{equation}
\label{OSPA_ver1}
d^{OSPA}(\mathbf{X},\mathbf{\hat{X}}) = \min_{\pi \in \Pi_N}\sum_{i=1}^{N}\left(\mathbf{X}_i - \mathbf{\hat{X}}_{\pi(i)} \right)^{T}\left(\mathbf{X}_i - \mathbf{\hat{X}}_{\pi(i)} \right)
\end{equation}

Let us define OSPA using the stacked notation as follows:
\begin{equation}
\label{OSPA_ver2}
d^{OSPA}(\mathbf{X},\mathbf{\hat{X}}) = \min_{\pi \in \Pi_N}\left(\mathbf{X} - \pi\left(\mathbf{\hat{X}}\right) \right)^{T}\left(\mathbf{X} - \pi\left(\mathbf{\hat{X}}\right)\right)
\end{equation}

Let us define MOSPA and the relative MMOSPA as follows  \cite{guerriero2010shooting} \footnote{The notation $E_{\mu}$ denotes expectation with respect to the measure $\mu$} :
\begin{equation}
\label{MOSPA}
MOSPA\left(\mu, \mathbf{\hat{X}}\right) = E_{\mu}\left[d^{OSPA}(\mathbf{X},\mathbf{\hat{X}}) \right]
\end{equation}
\begin{equation}
\label{MMOSPA}
  \mathbf{\hat{X}}^{MMOSPA} =\textrm{arg}\min_{\mathbf{\hat{X}}} E_{\mu}\left[d^{OSPA}(\mathbf{X},\mathbf{\hat{X}}) \right]
\end{equation}

Let us define the $p^{\textrm{th}}$ Wasserstein distance \cite{Villani}, \cite{Ambrosio} between two probability measures on some space $M$ as:
\begin{equation}
\label{wasser}
W_{p} (\nu_1, \nu_2):=\left( \inf_{\gamma \in \Gamma (\nu_1, \nu_2)} \int_{M \times M} d(x, y)^{p} \, \mathrm{d} \gamma (x, y) \right)^{1/p}
\end{equation}
$\Gamma (\nu_1, \nu_2)$ denotes the set of all joint measures on $M \times M$ with marginal measures $\nu_1$ and $\nu_2$.
For this paper we use the Euclidean distance for $d$, $p=2$ and $M$ is the space $\mathbb{R}^{N\times n_x}$.

We are now ready to formulate the main problem of this letter. It will be shown that the following equation holds, which connects the MOSPA and the Wasserstein distance:
\begin{equation}
\label{eq_prob_form}
MOSPA\left(\mu, \mathbf{\hat{X}}\right) = W^{2}_{2} (\mu, \nu)
\end{equation}
where $\nu$ is a discrete measure which depends on $\mathbf{\hat{X}}$ and it will be defined later.

Let us define the collection of sets $\mathbb{S}_{\pi}\left(\mathbf{\hat{X}}\right)$ \footnote{Hereafter, we use the short notation $\mathbb{S}_{\pi}$ for $\mathbb{S}_{\pi}\left(\mathbf{\hat{X}}\right)$.} for all $\pi \in \Pi_N$ as follows:
\begin{equation}
\label{S_pi}
\begin{split}
\mathbb{S}_{\pi}\left(\mathbf{\hat{X}}\right) = \{\mathbf{X} \in \mathbb{R}^{N\times n_x}: \left(\mathbf{X} - \pi\left(\mathbf{\hat{X}}\right) \right)^{T}\left(\mathbf{X} - \pi\left(\mathbf{\hat{X}}\right)\right)  \\ \leq \left(\mathbf{X} - \tilde{\pi}\left(\mathbf{\hat{X}}\right) \right)^{T}\left(\mathbf{X} - \tilde{\pi}\left(\mathbf{\hat{X}}\right)\right), \forall \tilde{\pi} \in \Pi_N \}
\end{split}
\end{equation}	

It follows then, that for any two different permutations $\pi$ and $\tilde{\pi}$, the set $\mathbb{S}_{\pi} \cap \mathbb{S}_{\tilde{\pi}}$ has Lebesque measure zero, hence its measure with respect to $\mu$ is also zero. Here we assume without loss of generality that $\mathbf{\hat{X}}_{i} \neq \mathbf{\hat{X}}_{j}, \forall i\neq j$.

For a fixed deterministic $\mathbf{\hat{X}}$, let us define the discrete random variable (d.r.v.) $\mathbf{Y}$ as follows \footnote{ The symbol $\mathbb{I}_{\mathbb{S}}$ denotes the indicator function of the set $\mathbb{S}$, i.e. $\mathbb{I}_{\mathbb{S}}\left(x \right)= 1$ if $x \in \mathbb{S}$ and zero otherwise.}:
\begin{equation}
\label{Yrandom}
\mathbf{Y} = \sum_{\pi \in \Pi_N} \pi\left(\mathbf{\hat{X}}\right) \cdot \mathbb{I}_{\mathbb{S}_{\pi}}\left(\mathbf{X}\right)
\end{equation}
From equation~(\ref{Yrandom}) we conclude that $\mathbf{Y}$ takes value $\pi\left(\mathbf{\hat{X}}\right)$ with probability $\mu(\mathbb{S}_{\pi})$ for all $\pi \in \Pi_N$.
Then, the d.r.v. $\mathbf{Y}$ induces the discrete probability measure $\nu$ on  $\mathbb{R}^{N\times n_x}$ which is defined as follows:
\begin{equation}
\label{nu_label}
\nu(x) = \sum_{\pi \in \Pi_N} \mu(\mathbb{S}_{\pi}) \cdot \delta \left(x-\pi\left(\mathbf{\hat{X}}\right)\right)
\end{equation}
The measure $\nu$, which depends on $\mathbf{\hat{X}}$, will play the role of measure $\nu$ from equation~(\ref{eq_prob_form}).

\section{Main Results}
\label{sec:mainresults}

In this section, we will present the main results of this letter. We will formulate and prove Theorem~\ref{thm1}, which shows the connection between the Wasserstein distance and the MOSPA for general measures and then we will prove certain geometrical properties of the optimal MOSPA.

\subsection{MOSPA meets Wasserstein in the general case}

In this subsection, we will formulate and prove the main theorem of the paper, in which we establish the connection between the Wasserstein distance and the MOSPA.

\begin{thm}
\label{thm1}
Given a probability measure $\mu$ defined on $\mathbb{R}^{N\times n_x}$, a vector $\hat{X} \in \mathbb{R}^{N\times n_x}$ and a probability measure $\nu$ defined in equation~(\ref{nu_label}), the following holds: 
\begin{equation}
\label{thm1_eq}
 E_{\mu}\left[d^{OSPA}(\mathbf{X},\mathbf{\hat{X}}) \right] = W^{2}_{2} (\mu, \nu)
\end{equation}
\end{thm}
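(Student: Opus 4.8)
The plan is to establish the two inequalities $W_2^2(\mu,\nu)\le E_\mu[d^{OSPA}(\mathbf{X},\mathbf{\hat{X}})]$ and $W_2^2(\mu,\nu)\ge E_\mu[d^{OSPA}(\mathbf{X},\mathbf{\hat{X}})]$ separately. The starting point is to rewrite the left-hand side conveniently: by the definition of $\mathbb{S}_\pi$ in~(\ref{S_pi}), on $\mathbb{S}_\pi$ the permutation attaining the minimum in~(\ref{OSPA_ver2}) is $\pi$, so $d^{OSPA}(\mathbf{X},\mathbf{\hat{X}})=(\mathbf{X}-\pi(\mathbf{\hat{X}}))^{T}(\mathbf{X}-\pi(\mathbf{\hat{X}}))$ there. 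Since the overlaps $\mathbb{S}_\pi\cap\mathbb{S}_{\tilde\pi}$ are $\mu$-null (precisely where absolute continuity of $\mu$ is used), the random variable $\mathbf{Y}$ of~(\ref{Yrandom}) is a measurable, $\mu$-almost-everywhere well-defined deterministic function of $\mathbf{X}$, and splitting the expectation over the sets $\mathbb{S}_\pi$ yields $E_\mu[d^{OSPA}(\mathbf{X},\mathbf{\hat{X}})]=E_\mu[(\mathbf{X}-\mathbf{Y})^{T}(\mathbf{X}-\mathbf{Y})]$.

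For the upper bound I would exhibit an explicit admissible coupling: let $\gamma^\star$ be the law of the pair $(\mathbf{X},\mathbf{Y})$, i.e.\ the push-forward of $\mu$ under $\mathbf{X}\mapsto(\mathbf{X},\mathbf{Y})$. Its first marginal is $\mu$ by construction, and since $\mathbf{Y}$ takes the value $\pi(\mathbf{\hat{X}})$ on the set $\mathbb{S}_\pi$ of $\mu$-mass $\mu(\mathbb{S}_\pi)$, while the points $\pi(\mathbf{\hat{X}})$ are distinct by the standing assumption $\mathbf{\hat{X}}_i\neq\mathbf{\hat{X}}_j$, its second marginal is exactly the discrete measure $\nu$ of~(\ref{nu_label}). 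Hence $\gamma^\star\in\Gamma(\mu,\nu)$, and substituting it into~(\ref{wasser}) gives $W_2^2(\mu,\nu)\le\int(x-y)^{T}(x-y)\,\mathrm{d}\gamma^\star=E_\mu[(\mathbf{X}-\mathbf{Y})^{T}(\mathbf{X}-\mathbf{Y})]=E_\mu[d^{OSPA}(\mathbf{X},\mathbf{\hat{X}})]$.

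For the lower bound I would take an arbitrary $\gamma\in\Gamma(\mu,\nu)$ and exploit that $\nu$ is supported on the finite set $\{\pi(\mathbf{\hat{X}}):\pi\in\Pi_N\}$. Restricting $\gamma$ to each slice $\mathbb{R}^{N\times n_x}\times\{\pi(\mathbf{\hat{X}})\}$ decomposes it as $\gamma=\sum_{\pi\in\Pi_N}\gamma_\pi$; writing $\mu_\pi$ for the first marginal of $\gamma_\pi$, these are nonnegative measures with $\sum_{\pi}\mu_\pi=\mu$. Then $\int(x-y)^{T}(x-y)\,\mathrm{d}\gamma=\sum_{\pi}\int(x-\pi(\mathbf{\hat{X}}))^{T}(x-\pi(\mathbf{\hat{X}}))\,\mathrm{d}\mu_\pi(x)\ge\sum_{\pi}\int d^{OSPA}(x,\mathbf{\hat{X}})\,\mathrm{d}\mu_\pi(x)=\int d^{OSPA}(x,\mathbf{\hat{X}})\,\mathrm{d}\mu(x)$, where the inequality is the pointwise estimate $(x-\pi(\mathbf{\hat{X}}))^{T}(x-\pi(\mathbf{\hat{X}}))\ge\min_{\tilde{\pi}}(x-\tilde{\pi}(\mathbf{\hat{X}}))^{T}(x-\tilde{\pi}(\mathbf{\hat{X}}))=d^{OSPA}(x,\mathbf{\hat{X}})$. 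Taking the infimum over $\gamma$ gives $W_2^2(\mu,\nu)\ge E_\mu[d^{OSPA}(\mathbf{X},\mathbf{\hat{X}})]$, and combined with the upper bound this establishes~(\ref{thm1_eq}).

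The step I expect to require the most care is the disintegration used in the lower bound: verifying that restricting an arbitrary coupling to the finitely many slices indeed produces genuine nonnegative measures $\gamma_\pi$ whose first marginals sum to $\mu$. This is routine precisely because $\nu$ has finite support — no general disintegration theorem is needed, one merely intersects with $|\Pi_N|$ measurable sets. Two minor points also merit a remark: the transport map underlying $\gamma^\star$ is only defined $\mu$-almost everywhere, which is harmless by absolute continuity of $\mu$; and the argument never needs the Wasserstein infimum to be attained, since the upper bound is witnessed by the single coupling $\gamma^\star$ while the lower bound holds for every coupling.
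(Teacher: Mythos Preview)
Your proof is correct and follows essentially the same strategy as the paper: rewrite the MOSPA as $E_\mu[(\mathbf{X}-\mathbf{Y})^{T}(\mathbf{X}-\mathbf{Y})]$ via the partition $\{\mathbb{S}_\pi\}$, use the law of $(\mathbf{X},\mathbf{Y})$ as an admissible coupling for the upper bound, and bound any coupling from below by the pointwise inequality $(x-\pi(\mathbf{\hat{X}}))^{T}(x-\pi(\mathbf{\hat{X}}))\ge d^{OSPA}(x,\mathbf{\hat{X}})$. The only minor difference is that for the lower bound the paper disintegrates $\gamma$ via the conditional $\gamma_{\nu\mid\mu}$ and then splits the outer $\mu$-integral over the sets $\mathbb{S}_\pi$, whereas you slice $\gamma$ on the finitely many atoms of the \emph{second} marginal; your route is slightly more elementary since it avoids invoking regular conditional probabilities, but the two arguments are otherwise the same.
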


\begin{proof}
	We first show that 
\begin{equation}
	 E_{\mu}\left[ \left(\mathbf{X} - \mathbf{Y} \right)^{T}\left(\mathbf{X} - \mathbf{Y}\right)\right] = E_{\mu}\left[d^{OSPA}(\mathbf{X},\mathbf{\hat{X}}) \right]
\end{equation}
with $\mathbf{Y}$ defined in equation~(\ref{Yrandom}).

\begin{equation*}
\begin{split}
E_{\mu} &\left[ \left(\mathbf{X} - \mathbf{Y} \right)^{T}\left(\mathbf{X} - \mathbf{Y}\right)\right]  \\
& = \int_{\mathbb{R}^{N\times n_x}} \left(\mathbf{X} - \mathbf{Y} \right)^{T}\left(\mathbf{X} -\mathbf{Y} \right) d\mu \\
&=\sum_{\pi \in \Pi_N} \int_{\mathbb{S}_{\pi}} \left(\mathbf{X} - \mathbf{Y} \right)^{T}\left(\mathbf{X} -\mathbf{Y} \right) d\mu\\
&\stackrel{a}{=}\sum_{\pi \in \Pi_N} \int_{\mathbb{S}_{\pi}} \left(\mathbf{X} - \sum_{\pi \in \Pi_N} \pi\left(\mathbf{\hat{X}}\right) \cdot \mathbb{I}_{\mathbb{S}_{\pi}}\left(\mathbf{X}\right) \right)^{T} \\
& \quad \quad \quad \quad \quad \left(\mathbf{X} -\sum_{\pi \in \Pi_N} \pi\left(\mathbf{\hat{X}}\right) \cdot \mathbb{I}_{\mathbb{S}_{\pi}}\left(\mathbf{X}\right) \right) d\mu \\
&\stackrel{b}{=} \sum_{\pi \in \Pi_N} \int_{\mathbb{S}_{\pi}} \left(\mathbf{X} - \pi\left(\mathbf{\hat{X}}\right) \right)^{T}  \left(\mathbf{X} - \pi\left(\mathbf{\hat{X}}\right) \right) d\mu \\
&\stackrel{c}{=}E_{\mu}\left[d^{OSPA}(\mathbf{X},\mathbf{\hat{X}}) \right]
\end{split}
\end{equation*}
The equalities ($a$) and ($b$) follow from the definition of $\mathbf{Y}$ in equation~(\ref{Yrandom}) and the equality 
($c$) follows from the definition of $\mathbb{S}_{\pi}$ in equation~(\ref{S_pi}) and the definition of OSPA in equation~(\ref{OSPA_ver2}).
Next we show that 
\begin{equation}
\label{ineq1}
	 W^{2}_{2} (\mu, \nu) \leq E_{\mu} \left[ \left(\mathbf{X} - \mathbf{Y} \right)^{T}\left(\mathbf{X} - \mathbf{Y}\right)\right]  
\end{equation}
Equation ~(\ref{Yrandom}) defines the probability measure $\nu$ and moreover defines a joint measure  $\gamma \in \Gamma(\mu,\nu)$. Hence, from the definition of the Wasserstein distance in equation~(\ref{wasser}), equation ~(\ref{ineq1}) immediately follows.
We show the reverse inequality next that
\begin{equation}
\label{ineq2}
W^{2}_{2} (\mu, \nu) \geq E_{\mu} \left[ \left(\mathbf{X} - \mathbf{Y} \right)^{T}\left(\mathbf{X} - \mathbf{Y}\right)\right]  
\end{equation}
Choose an arbitrary joint probability measure $\gamma \in \Gamma(\mu,\nu)$. From $\gamma$ we can define the conditional probability measure $\gamma_{\nu|\mu}$ with respect to the measure $\mu$. It follows then, that $\gamma_{\nu|\mu}$ is a discrete probability measure with the same support as $\nu$. 
We can write the following:
\begin{equation}
\label{ineq2_tmp}
\begin{split}
 &E_{\gamma} \left[ \left(\mathbf{X} - \mathbf{Y} \right)^{T}\left(\mathbf{X} - \mathbf{Y}\right)\right] \\
 &= \int_{\mathbb{R}^{N\times n_x} \times \mathbb{R}^{N\times n_x}} \left(\mathbf{X} - \mathbf{Y} \right)^{T}\left(\mathbf{X} - \mathbf{Y}\right) d\gamma \\
 &= \int_{\mathbb{R}^{N\times n_x}} \int_{\mathbb{R}^{N\times n_x}}  \left(\mathbf{X} - \mathbf{Y} \right)^{T}\left(\mathbf{X} - \mathbf{Y}\right) d\gamma_{\nu|\mu} d\mu \\
 &= \sum_{\pi \in \Pi_N}\int_{\mathbb{S}_{\pi}} \int_{\mathbb{R}^{N\times n_x}}  \left(\mathbf{X} - \mathbf{Y} \right)^{T}\left(\mathbf{X} - \mathbf{Y}\right) d\gamma_{\nu|\mu} d\mu\\
 &\stackrel{a}{\geq}\sum_{\pi \in \Pi_N}\int_{\mathbb{S}_{\pi}} \int_{\mathbb{R}^{N\times n_x}}  \left(\mathbf{X} -  \pi\left(\mathbf{\hat{X}}\right) \right)^{T}\left(\mathbf{X} -  \pi\left(\mathbf{\hat{X}}\right)\right) d\gamma_{\nu|\mu} d\mu\\
 &\stackrel{b}{=}\sum_{\pi \in \Pi_N}\int_{\mathbb{S}_{\pi}}  \left(\mathbf{X} -  \pi\left(\mathbf{\hat{X}}\right) \right)^{T}\left(\mathbf{X} -  \pi\left(\mathbf{\hat{X}}\right)\right) \int_{\mathbb{R}^{N\times n_x}}d\gamma_{\nu|\mu}d\mu\\
 &=\sum_{\pi \in \Pi_N}\int_{\mathbb{S}_{\pi}} \left(\mathbf{X} -  \pi\left(\mathbf{\hat{X}}\right) \right)^{T}\left(\mathbf{X} -  \pi\left(\mathbf{\hat{X}}\right)\right) d\mu \\
 &\stackrel{c}{=}E_{\mu}\left[d^{OSPA}(\mathbf{X},\mathbf{\hat{X}}) \right]
\end{split}
\end{equation}
The inequality ($a$) follows from the definition of  $\mathbb{S}_{\pi}$ in equation~(\ref{S_pi}), equality ($b$) follows from the fact that $\pi\left(\mathbf{\hat{X}}\right)$ is a deterministic vector for $\mathbf{X} \in \mathbb{S}_{\pi}$, which does not depend on the conditional probability measure $\gamma_{\nu|\mu}$. The equality 
($c$) follows from the definition of $\mathbb{S}_{\pi}$ in equation~(\ref{S_pi}) and the definition of OSPA in equation~(\ref{OSPA_ver2}).
From equation~(\ref{ineq2_tmp}), we conclude that 
\begin{equation}
 E_{\gamma} \left[ \left(\mathbf{X} - \mathbf{Y} \right)^{T}\left(\mathbf{X} - \mathbf{Y}\right)\right] \geq
 E_{\mu}\left[d^{OSPA}(\mathbf{X},\mathbf{\hat{X}}) \right]
\end{equation}
Taking the infimum over all  $\gamma \in \Gamma(\mu,\nu)$, equation~(\ref{ineq2}) follows.
Hence, from equation~(\ref{ineq1}) and equation~(\ref{ineq2}), equation ~(\ref{thm1_eq}) follows.
\end{proof}

\subsection{Geometry of the optimal MMOSPA}
In this subsection, we will prove that the geometry of the optimal partition from Theorem~\ref{thm1} satisfies certain geometric properties. It was shown in Theorem~(\ref{thm1}), that:
\begin{equation}
E_{\mu}\left[d^{OSPA}(\mathbf{X},\mathbf{\hat{X}}) \right] = W^{2}_{2} (\mu, \nu)
\end{equation}

with the measure $\nu$ being a discrete probability measure, which takes values $\pi\left(\mathbf{\hat{X}}\right)$ with probability $\mu\left(\mathcal{S}_{\pi}\right)$. Theorem 2 from \cite{Geiß2012} shows that, the optimal partition of the space $\mathcal{R}^{N\times n_{x}}$ which achieves $W^{2}_{2} (\mu, \nu)$ exists, it is unique and it is given by the additive weighted Voronoi regions defined as follows:
\begin{equation}
\begin{split}
\mathcal{C}_{i}\left(\mathcal{P}, \Omega\right) =\{\mathbf{X} \in \mathcal{R}^{N\times n_{x}}: &\left(\mathbf{X} - p_{i}\right)^{T}\left(\mathbf{X} - p_{i}\right) + \omega_{i}\leq\\
& \left(\mathbf{X} - p_{j}\right)^{T}\left(\mathbf{X} - p_{j}\right) + \omega_{j} ,\\
&\forall j \in \{1, 2, \ldots, N!\} \}
\end{split}
\end{equation}
where $\mathcal{P} = \{p_1, \ldots, p_{N!}\} \subseteq \mathcal{R}^{N\times n_{x}}$ is a set of centroids and $\Omega = \{\omega_1, \ldots, \omega_{N!}\} \subseteq \mathcal{R}$ is a set of real numbers. 

\begin{prop}
	\label{prop1}
$\pi\left(\mathbf{\hat{X}}\right)$ are the centroids of the additive weighted Voronoi regions $\mathcal{C}_{i}\left(\mathcal{P}, \Omega\right)$, with $\omega_{i} = \omega_{j}, \forall i \neq j$.
\end{prop}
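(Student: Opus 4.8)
The plan is to combine the uniqueness part of Theorem~2 in \cite{Geiß2012} with the explicit optimal transport plan already exhibited in the proof of Theorem~\ref{thm1}. In that proof, the d.r.v.\ $\mathbf{Y}$ of equation~(\ref{Yrandom}) induces a joint measure $\gamma \in \Gamma(\mu,\nu)$ whose cost equals $W_2^2(\mu,\nu)$, so $\gamma$ is an optimal transport plan; under this plan every point of $\mathbb{S}_{\pi}$ is sent to the single atom $\pi(\mathbf{\hat{X}})$ of $\nu$. Thus the collection $\{\mathbb{S}_{\pi}\}_{\pi \in \Pi_N}$ is an optimal partition of $\mathbb{R}^{N\times n_x}$ associated with $W_2^2(\mu,\nu)$, with the cell $\mathbb{S}_{\pi}$ carrying the centroid $\pi(\mathbf{\hat{X}})$.

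Next I would invoke Theorem~2 of \cite{Geiß2012}: since $\mu$ is absolutely continuous, the optimal partition is unique and takes the form of additive weighted Voronoi regions $\mathcal{C}_i(\mathcal{P},\Omega)$. Because $\mathbf{\hat{X}}_i\neq\mathbf{\hat{X}}_j$ for $i\neq j$, the $N!$ vectors $\{\pi(\mathbf{\hat{X}})\}_{\pi\in\Pi_N}$ are pairwise distinct, so $\nu$ has exactly $N!$ atoms and the diagram has exactly $N!$ sites. By uniqueness, $\{\mathcal{C}_i(\mathcal{P},\Omega)\}$ must coincide with $\{\mathbb{S}_{\pi}\}$, and the site $p_i$ attached to a cell is forced to be the atom of $\nu$ to which that cell's mass is transported; hence $\mathcal{P}=\{\pi(\mathbf{\hat{X}}):\pi\in\Pi_N\}$ and every $\pi(\mathbf{\hat{X}})$ is a centroid.

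It then remains to show that the weights may be taken equal. Here I would simply compare the defining inequality of $\mathbb{S}_{\pi}$ in equation~(\ref{S_pi}) with that of $\mathcal{C}_i(\mathcal{P},\Omega)$: the former is precisely the latter with $\omega_i=\omega_j$ for all $i,j$ (for instance all weights zero). Since the regions $\mathbb{S}_{\pi}$ are exactly the equal-weight additive Voronoi regions of the sites $\pi(\mathbf{\hat{X}})$ and, by the previous step, already realize the unique optimal partition, the choice $\Omega=\{\omega,\ldots,\omega\}$ is admissible, which is the claim.

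The main obstacle I anticipate is the bookkeeping around the uniqueness statement: one must make sure the ``optimal partition'' of \cite{Geiß2012} is identified with the object we have constructed (an optimal coupling versus an optimal assignment of cells to sites), pin down the index correspondence $i\leftrightarrow\pi$, and handle cleanly the overlaps $\mathbb{S}_{\pi}\cap\mathbb{S}_{\tilde{\pi}}$ (which have $\mu$-measure zero) as well as any atom $\pi(\mathbf{\hat{X}})$ with $\mu(\mathbb{S}_{\pi})=0$, for which the corresponding cell is $\mu$-negligible and the centroid assertion is vacuous. Everything else reduces to a direct comparison of definitions.
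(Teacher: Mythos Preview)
Your proposal is correct and follows essentially the same approach as the paper: both invoke the optimality of the partition $\{\mathbb{S}_{\pi}\}$ established in the proof of Theorem~\ref{thm1}, appeal to the uniqueness statement in Theorem~2 of \cite{Geiß2012}, and then identify $\mathbb{S}_{\pi}$ with the equal-weight additive Voronoi cells by direct comparison of the defining inequalities. Your version is simply more detailed in the bookkeeping (distinctness of the atoms, index correspondence, measure-zero overlaps), whereas the paper's proof is a terse three-sentence sketch of the same argument.
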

\begin{proof}
For a given $\mathbf{\hat{X}}$, it follows from Theorem 2 in~\cite{Geiß2012}, that $\mathcal{C}_{i}\left(\mathcal{P}, \Omega\right)$ are optimal sets. Moreover from the proof of Theorem 1, it follows that the sets $\mathcal{S}_{\pi}$ are optimal. Then it can be seen that the sets $\mathcal{S}_{\pi}$ are the same as the sets $\mathcal{C}_{i}\left(\mathcal{P}, \Omega\right)$ with $\omega_{i} = \omega_{j}, \forall i \neq j$.
\end{proof}	
	
\begin{figure}
	\centering
		\includegraphics[scale=0.5]{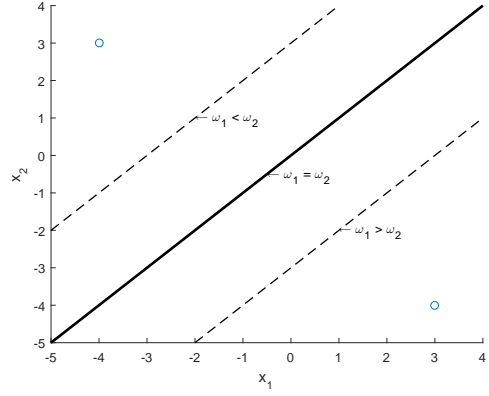}
	\caption{Voronoi diagrams for the two targets case. The circles represent the two unlabeled target state estimates.}\label{VoronoiFigure}
\end{figure}	

Interestingly, the results above are a dual of the results from \cite{guerriero2010shooting} with respect to the labeling of the target states versus target states estimates.
In \cite{guerriero2010shooting}, the MMOSPA estimates were fixed while $\mu$ was folded. In this letter, the measure $\mu$ remains fixed while the MMOSPA estimates define the probability measure $\nu$, which depends on the permutations of the MMOSPA estimates themselves and on the Voronoi diagrams from Proposition \ref{prop1}.

Figure~\ref{VoronoiFigure} shows an example of the Voronoi diagram for MMOSPA estimates. In this case $N=2$ and $n_x = 1$, i.e. there are two one-dimensional targets. Moreover, let the sequence of target state estimates be the set $\{-4,3\}$. Then, for a realization of the vector $\mathbf{X}$ above the solid line in Figure~\ref{VoronoiFigure}, the target states estimates are represented by the vector $\left[-4 \quad 3 \right]$, while for a realization of the vector $\mathbf{X}$ below the solid line,  the target states estimates are represented by the vector $\left[3 \quad -4 \right]$. The dotted lines show examples of additive Voronoi diagrams with different weights. For the case with three one-dimensional targets, there will be  six different possible target approximations and six regions separated by hyperplanes in the three dimensional Euclidean space.

\begin{rem}
	\label{Remark1}
	The results in \cite{Geiß2012} and in general in transportation theory \cite{Villani} hold for general distances than Euclidean and the geometrical properties from Proposition~\ref{prop1} remain still valid. 
	
	For example, in the case of a more general distance GOSPA (General OSPA)\footnote{This new distance might be useful in applications where objects labels are more "important" than others. This selective importance is reflected in the definition of the new distance which will ``favor" some permutations over others for the MOSPA estimation task.}:
	\begin{equation}
	d^{GOSPA}\left(\mathbf{X},\mathbf{\hat{X}} \right) = \left(\mathbf{X} - \pi\left(\mathbf{\hat{X}}\right) \right)^{T}\mathbf{Q}\left(\mathbf{X} - \pi\left(\mathbf{\hat{X}}\right)\right)
	\end{equation}
	 where $\mathbf{Q}$ is a positive definite matrix, the additively weigthed Voronoi diagrams are no longer symmetric (i.e. $\omega_{i} \neq \omega_{j}) $\footnote{Similarly to the hyperplane in Figure \ref{VoronoiFigure}, the new separating hyperplane will be closer to one of the two MMOSPA estimates (circles in the figure).}.
	Lastly, if more general distances are used \cite{Geiß2012}, the Voronoi diagrams will be separated by more general manifolds than hyperplanes \cite{Villani}.
\end{rem}

\section{Conclusion}
\label{sec:conclusions}

The main result of this letter establishes the equivalence between the MOSPA measure, which is a concept widely used in target tracking community, and Wasserstein distance between one continuous measure and one discrete measure. This finding allowed us to draw a connection with a recent result in computational geometry \cite{Geiß2012}, which showed that additively weigthed Voronoi diagrams can optimally solve some cases of the Monge-Kantorovich transportation problem, with one measure being discrete.
More specifically, we were able to show that MMOSPA estimates are exactly equal to the centroid of these Voronoi diagrams for a particular choice of the weights. Revealing geometrical structures for the MMOSPA estimates advances our understanding of the MOSPA estimation problem drawing upon different scientific fields.
In the future we are planning to extend the current results, if possible, to the more general case of MOSPA measure defined for sets with different cardinalities.


\section*{Acknowledgment}

The authors would like to thank Jayakrishnan Unnikrishnan, Michael Lexa and Peter Spaeth for their comments.



%

%

\end{document}